\theoremstyle{plain}
\newtheorem{thm}{Theorem}[section]
\newtheorem{lem}[thm]{Lemma}
\newtheorem{conj}[thm]{Conjecture}
\theoremstyle{definition}
\def\SP{{\mathfrak P}}               
\def\des{\operatorname{des}}         
\def\seg{\operatorname{seg}}         
\newcommand{\sep}{\preceq}
\begin{document}

\begin{center}
	{\large \bf {Multivariate stable Eulerian polynomials \\[5pt]
			on segmented permutations}}
\end{center}

\begin{center}
 Philip B. Zhang$^{1}$ and Xutong Zhang$^{2}$\\[6pt]

	 College of Mathematical Science \\
	Tianjin Normal University, Tianjin  300387, P. R. China\\[6pt]
	
	Email:  $^{1}${\tt  zhang@tjnu.edu.cn} \quad
	$^{2}${\tt  zhang.xutong@foxmail.com}
\end{center}

\noindent\textbf{Abstract.}
Recently, Nunge studied Eulerian polynomials on segmented permutations, namely \emph{generalized Eulerian polynomials}, and further asked  whether their coefficients form unimodal sequences.
In this paper, we prove the stability of the generalized Eulerian polynomials and hence confirm Nunge's conjecture. Our proof is based on Br\"and\'en's stable  multivariate Eulerian polynomials.
By acting  on Br\"and\'en's polynomials with a stability-preserving linear operator, we get a multivariate refinement of the generalized  Eulerian polynomials.
To prove Nunge's conjecture, we  also develop a general  approach to obtain generalized Sturm sequences from bivariate stable polynomials.
\\

\noindent {\bf Keywords:}  {segmented permutations, generalized  Eulerian poynomials, unimodality, stable polynomials,  generalized Sturm sequences}. \\

\noindent {\bf AMS Subject Classifications:} 05A15, 26C10.

\section{Introduction}
The main objective of this paper is to prove a unimodality conjecture for the generalized Eulerian polynomials proposed by Nunge.
In this paper, we give a multivariate refinement of the generalized Eulerian polynomials and then prove  their stability,  from which Nunge's conjecture can be confirmed.


Let $\mathfrak{S}_n$ denote the set of permutations of $[n]:=\{1,2,\ldots,n\}$.
Given a permutation $\pi=\pi_1 \pi_2 \cdots \pi_n \in \mathfrak{S}_n$, let
$$\des (\pi) = | \{i \in [n-1]  : \pi_i > \pi_{i+1}\}|$$
denote the descent number of $\pi$.
The Eulerian number  $A(n,k)$ is defined as  the number of permutations with $k$ descents in $\mathfrak{S}_n$ and the Eulerian polynomial $A_n(t)$ is usually defined as the ordinary generating function of $A(n,k)$, namely,
\begin{align*}
A_n(t) = \sum_{k=0}^{n-1}A(n,k)t^k = \sum_{\pi \in\mathfrak{S}_n}t^{\des (\pi)}.
\end{align*}
Eulerian polynomials and Eulerian numbers have been extensively studied over the
years, see
\cite{Foata2010Eulerian, Petersen2015Eulerian}.

Corteel  and Nunge \cite{Corteel20172} studied a recoil statistic on partially signed permutations during their study of the 2-species  exclusion processes and Hopf algebras over  segmented compositions, see also \cite{Nunge2018equivalence}.
Recently, Nunge \cite{NungeEulerian} gave a corresponding statistic on segmented permutations, namely the descent  statistic.
A \emph{segmented permutation} is a permutation equipped with some separate bars which can be inserted into the  slots between two consecutive letters.
Let  $\SP_n$ denote the set of segmented
permutations of $[n]$.
For $\sigma \in  \SP_n$, a position $i$ is said to be a
\emph{descent} if
$\sigma_{i}>\sigma_{i+1}$ and there is no
bar in the slot between $\sigma_i$ and $\sigma_{i+1}$. We denote by $\des(\sigma)$
(respectively,  $\seg(\sigma)$) the number of descents (respectively, bars) of
$\sigma$. For example, with $\sigma = 2|516|34$, we have
$\des(\sigma) =1$ and $\seg(\sigma) = 2$.
The generalized Eulerian numbers are defined as follows:
\begin{equation*}
T(n, k) = | \{\sigma \in \SP_n : \des(\sigma) = k\}  |
\end{equation*}
and
\begin{equation*}
K(n,i,j) = | \{\sigma \in \SP_n : \des(\sigma)=i,~\seg(\sigma)=j\} |.
\end{equation*}
Following Nunge, let
\begin{equation*}
P_n(t)  =\sum_{k=0}^{n-1}T(n,k)t^k
 = \sum_{\sigma \in \SP_n} t^{\des(\sigma)}
\end{equation*}
and
\begin{equation*}
\alpha_n(t,q)  = \sum_{i=0}^{n-1}\sum_{j=0}^{n-i-1}K(n,i,j) t^i q^j
 = \sum_{\sigma \in \SP_n} t^{\des(\sigma)}q^{\seg(\sigma)}.
\end{equation*}
Note that the polynomial $\alpha_n(t,q)$ gives back the usual Eulerian polynomials at $q=0$ and
the ordered Bell polynomials at $t=0$.
Namely,
$\alpha_n(t,0) = A_n(t)$ and
$\alpha_n(0,q) = \sum_{r=0}^{n-1} (r+1)! S(n,r+1)q^r$,
where $S(n,k)$ are the Stirling numbers of the second kind.

Nunge's conjecture is concerned with the unimodality of the rows and columns of $T(n,k)$ and $K(n,i,j)$.
Recall that a  sequence of positive integers $a_0, a_1, \ldots, a_n$ is said to be \emph{unimodal} if there exists an index $0\le i \le n$ such that $a_0 \le a_1 \le \cdots \le  a_{i-1} \le a_i \ge a_{i+1} \ge \cdots \ge a_n$.
It is well known that, for a sequence of positive numbers, its log-concavity implies
unimodality, see \cite{Stanley1989Log}.
Nunge proposed the following conjecture.
\begin{conj}\label{conj}
	For any positive integer $n$, the integer sequences $\{T(n,k)\}_{k=0}^{n-1}$, \,
	$\{K(n,i,j)\}_{i=0}^{n-j-1}$ and $\{K(n,i,j)\}_{j=0}^{n-i-1}$ are unimodal sequences.
\end{conj}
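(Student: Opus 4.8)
The plan is to reduce all three unimodality claims to a single statement: the bivariate polynomial $\alpha_n(t,q)$ is stable. Granting this, each of the required integer sequences is the coefficient sequence of a univariate real-rooted polynomial with nonnegative coefficients, and Newton's inequalities then force log-concavity and hence unimodality. Concretely, $P_n(t)=\alpha_n(t,1)$ is obtained by specializing the real value $q=1$, so it is stable, and for real univariate polynomials stability is exactly real-rootedness; this handles $\{T(n,k)\}_k$. For the two-parameter array, write $\alpha_n(t,q)=\sum_j f_j(t)\,q^j=\sum_i g_i(q)\,t^i$, where $f_j(t)=\sum_i K(n,i,j)\,t^i$ and $g_i(q)=\sum_j K(n,i,j)\,q^j$. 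Since $\partial_q^{\,j}\alpha_n\big|_{q=0}=j!\,f_j(t)$, and both single-variable differentiation and specialization of a variable to a real number preserve stability (up to the zero polynomial, which does not occur here), every $f_j$ is real-rooted; symmetrically every $g_i$ is real-rooted. Real-rootedness with nonnegative coefficients then yields unimodality of $\{K(n,i,j)\}_i$ for fixed $j$ and of $\{K(n,i,j)\}_j$ for fixed $i$.

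It remains to establish the stability of $\alpha_n(t,q)$, which I regard as the heart of the argument. The quickest route starts from the factorization
\[
\alpha_n(t,q)=\sum_{\pi\in\SG_n}(t+q)^{\des(\pi)}(1+q)^{\,n-1-\des(\pi)}=(1+q)^{\,n-1}\,A_n\!\Big(\tfrac{t+q}{1+q}\Big),
\]
obtained by summing over the $2^{\,n-1}$ choices of bar positions for each underlying permutation: a bar on a descent trades $t$ for $q$, while a bar on an ascent contributes $q$. Because $A_n$ has only negative real roots $r_1,\dots,r_{n-1}$ with leading coefficient $1$, this rewrites as $\alpha_n(t,q)=\prod_{k}\big(t+(1-r_k)q-r_k\big)$, a product of linear forms whose $t$- and $q$-coefficients are positive; such a product is manifestly stable. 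This is the path I would take first. To match the framework announced in the abstract, the same conclusion can be reached without invoking real-rootedness of $A_n$ as a black box: one begins with Br\"and\'en's stable multivariate Eulerian polynomial and applies a stability-preserving linear operator, justified through the Borcea--Br\"and\'en characterization of stability preservers, to manufacture a genuinely multivariate refinement of $\alpha_n$ whose specialization of the auxiliary variables to $t,q,1$ returns $\alpha_n(t,q)$ and transports stability along the way.

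The slice-extraction step can be packaged cleanly as a general lemma: if $f(t,q)$ is a bivariate stable polynomial with nonnegative coefficients, then its coefficient polynomials in $q$ (and in $t$) form a generalized Sturm sequence of real-rooted polynomials, so that every row and every column of the coefficient matrix is log-concave. I would prove this lemma once, abstractly, and then simply feed $\alpha_n$ into it; this is presumably the role of the ``generalized Sturm sequences from bivariate stable polynomials'' advertised in the abstract, and it is what upgrades a single stability statement into simultaneous control of all rows and columns.

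The main obstacle is the construction and verification of the stability-preserving operator in the multivariate route: one must choose the operator so that acting on Br\"and\'en's polynomial reproduces exactly the descent/segment bookkeeping of $\alpha_n$ (each descent slot offering the choice $t$ versus $q$, each ascent slot offering $1$ versus $q$), while checking that the operator genuinely preserves stability via the symbol criterion. If instead one accepts the factorization route, stability becomes essentially immediate and the only remaining care is bookkeeping: confirming nonnegativity of coefficients and matching the index ranges $0\le i\le n-j-1$ and $0\le j\le n-i-1$ to the supports of the real-rooted slices, so that log-concavity delivers unimodality with no interior zeros.
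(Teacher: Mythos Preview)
Your proposal is correct and mirrors the paper's strategy, though with the emphasis reversed. The paper's primary route is the multivariate one you sketch second: it defines a $4n$-variable refinement $\bm{\alpha}_n(\bm{x},\bm{y},\bm{z},\bm{w})$, checks via the Borcea--Br\"and\'en symbol criterion that each operator $1+z_j\partial_{x_j}$ and $1+w_j\partial_{y_j}$ preserves stability, and applies their product to Br\"and\'en's $\bm{A}_n(\bm{x},\bm{y})$. Your preferred route, the identity $\alpha_n(t,q)=(1+q)^{n-1}A_n\big((t+q)/(1+q)\big)$, appears in the paper only as an \emph{alternative} proof of Theorem~\ref{thm:as}; moreover, your direct combinatorial derivation of this identity (each descent slot contributes $t+q$, each ascent slot $1+q$) is cleaner than the paper's, which deduces it from Nunge's binomial formula for $K(n,i,j)$. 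For the slice extraction, your observation $j!\,f_j(t)=\partial_q^{\,j}\alpha_n\big|_{q=0}$ already gives real-rootedness of every $f_j$ and $g_i$, which is all the conjecture needs; the paper instead proves the stronger statement (Theorem~\ref{thm:z}) that the slices form a generalized Sturm sequence, via the Hermite--Biehler theorem. Your argument is thus shorter and fully adequate for Conjecture~\ref{conj}, while the paper's buys the extra interlacing structure among the $K_{n,j}$ and $L_{n,i}$.
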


By the  Newton's inequality (see \cite[p.  104]{Hardy1952Inequalities}), if a polynomial with  nonnegative  coefficients  has  only  real  zeros,  then  the  sequence of its coefficients is log-concave and hence unimodal.
Given a positive integer $n$, for any  $0\le i,j \le n-1$, let
$$K_{n,j}(x)=\sum_{i=0}^{n-1-j}K(n,i,j)x^i \quad \mbox{ and } \quad L_{n,i}(x)=\sum_{j=0}^{n-1-i}K(n,i,j)x^j.$$
In this paper, we obtain the real-rootedness of $P_n(t)$, $K_{n,j}(x)$, and $L_{n,i}(x)$ from which the log-concavity and unimodality of their coefficients can be deduced  and hence give an affirmative answer to  Conjecture \ref{conj}.
\begin{thm}\label{thm:rz}
	For any integers  $n\ge 1$  and  $0\le i,j \le n-1$, the polynomials  $P_n(t)$, $K_{n,j}(x)$ and $L_{n,i}(x)$ have only real zeros.
\end{thm}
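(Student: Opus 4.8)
The plan is to realize the bivariate polynomial $\alpha_n(t,q)$ as a real stable polynomial, and then read off the three families from it by specialization and by coefficient extraction. First I would get an explicit handle on $\alpha_n$. Building a segmented permutation amounts to choosing $\pi\in\mathfrak{S}_n$ together with a set of barred slots; a descent of $\pi$ survives precisely when its slot is unbarred, so each descent slot contributes $t$ (unbarred) or $q$ (barred) while each ascent slot contributes $1$ or $q$. Summing over slot choices gives $\alpha_n(t,q)=\sum_{\pi\in\mathfrak{S}_n}(t+q)^{\des(\pi)}(1+q)^{n-1-\des(\pi)}=(1+q)^{n-1}A_n\!\big(\tfrac{t+q}{1+q}\big)$. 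Since $A_n$ factors as $A_n(t)=\prod_{i=1}^{n-1}(t-r_i)$ with all $r_i<0$ (Frobenius), this yields $\alpha_n(t,q)=\prod_{i=1}^{n-1}\big(t+(1-r_i)q-r_i\big)$, a product of real linear forms whose coefficients on $t$ and on $q$ are both positive. Each such form is real stable, and a product of real stable polynomials is real stable, so $\alpha_n(t,q)$ is real stable. This is the elementary route; the operator route advertised in the abstract, namely pushing Br\"and\'en's multivariate stable Eulerian polynomial through a stability-preserving linear operator, should reach the same conclusion together with a finer multivariate refinement, at the cost of verifying the preserver via the Borcea--Br\"and\'en symbol.

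Granting stability of $\alpha_n$, the first family is immediate: $P_n(t)=\alpha_n(t,1)=2^{n-1}A_n\!\big(\tfrac{t+1}{2}\big)$ is an affine substitution into the real-rooted $A_n$, hence real-rooted.

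For $K_{n,j}$ and $L_{n,i}$ the key observation is that these are single-variable coefficients of the bivariate stable $\alpha_n$: we have $K_{n,j}(x)=[q^j]\,\alpha_n(x,q)$ and $L_{n,i}(x)=[t^i]\,\alpha_n(t,x)$. Extracting one coefficient of a real-rooted univariate polynomial generally destroys real-rootedness, so this is the crux. My plan is to use that coefficient extraction is stability preserving: $[y^j]F=\tfrac{1}{j!}\big(\partial_y^{\,j}F\big)\big|_{y=0}$, where $\partial_y$ sends real stable polynomials to real stable polynomials (or to $0$) and specializing $y$ to the real value $0$ again preserves real stability; hence each coefficient is real stable in the remaining variable, i.e. real-rooted. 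The same content can be packaged more self-contained as a \emph{generalized Sturm sequence} statement for bivariate stable polynomials, asserting that the coefficient sequence $\{[y^j]F\}_j$ consists of real-rooted polynomials with consecutive members interlacing, which is the formulation the authors appear to develop.

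The main obstacle is establishing the real stability of the refinement cleanly and transferring it to coefficients. In the elementary route above stability is painless, but the genuine work the authors pursue is twofold: the operator construction, which requires checking that a specific linear operator preserves stability (the delicate Borcea--Br\"and\'en/Lieb--Sokal symbol computation), and the general bivariate-to-Sturm-sequence transfer, whose proof must control how the zeros of consecutive coefficient polynomials interlace. Both the derivative argument and the Sturm-sequence argument rest on the same preservation principle, so the real effort concentrates in the stability of $\alpha_n$ and the precise interlacing bookkeeping. Once these are in hand, Theorem~\ref{thm:rz} follows, and the log-concavity and unimodality asserted in Conjecture~\ref{conj} then follow by Newton's inequalities.
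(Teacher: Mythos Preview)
Your proposal is correct. In fact it essentially coincides with the paper's \emph{alternative} proof: the paper also derives the identity $\alpha_n(t,q)=(1+q)^{n-1}A_n\!\big(\tfrac{t+q}{1+q}\big)$ (though via Nunge's binomial formula for $K(n,i,j)$ rather than your direct slot-by-slot count) and uses it to give a second proof of the stability of $\alpha_n(t,q)$. The paper's \emph{main} route is the one you allude to: it builds a multiaffine polynomial $\bm{\alpha}_n(\bm x,\bm y,\bm z,\bm w)$, expresses it as $\prod_{j=2}^n(1+w_j\partial_{y_j})(1+z_j\partial_{x_j})$ applied to Br\"and\'en's $\bm{A}_n(\bm x,\bm y)$, checks via the Borcea--Br\"and\'en symbol that each factor preserves stability, and then diagonalizes. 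This buys a finer multivariate stable refinement; your product-of-linear-forms argument is quicker and entirely self-contained for the theorem at hand.

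For the coefficient extraction step, your $[y^j]F=\tfrac{1}{j!}(\partial_y^{\,j}F)|_{y=0}$ argument is correct and suffices for the real-rootedness asserted in the theorem. The paper instead proves a stronger lemma (its Theorem~2.4): for any bivariate stable $F(x,y)=\sum_j f_j(x)y^j$, the sequence $\{f_{n-j}\}_j$ is a generalized Sturm sequence, established by differentiating in $y$, applying the M\"obius involution $y\mapsto -1/y$, differentiating again to isolate two consecutive coefficients, and invoking Hermite--Biehler. That yields interlacing of consecutive $K_{n,j}$'s and $L_{n,i}$'s, which is more than the theorem requires but of independent interest.
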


Our approach to prove Theorem \ref{thm:rz} is to employ a multivariate stable polynomial, which
generalizes both $P_n(t)$ and $\alpha_n(t,q)$.
The theory of stable polynomials has turned out to play a key role in various combinatorial problems, see  \cite{Braenden2015Unimodality, Wagner2011Multivariate} and references therein.
In this paper, we introduce  a new multivariate polynomial $\bm{\alpha}_n (\bm{x},\bm{y},\bm{z},\bm{w})$ and  show that it can be constructed by acting on Br\"and\'en's multivariate stable Eulerian polynomial with a  linear operator. We then prove this operator preserves stability and hence the polynomial $\bm{\alpha}_n (\bm{x},\bm{y},\bm{z},\bm{w})$ is stable.

Our next step is to deduce the real-rootedness of  $P_n(t)$, $K_{n,j}(x)$, and $L_{n,i}(x)$  from the stability of   $\bm{\alpha}_n (\bm{x},\bm{y},\bm{z},\bm{w})$.
We note that $\alpha_n(t,q)$ can be reduced from  $\bm{\alpha}_n (\bm{x},\bm{y},\bm{z},\bm{w})$ and $P_n(t) =  \alpha_n(t,1)$.   Since the involved operators preserve real stability, we obtain the stability of  $\alpha_n(t,q)$ and $P_n(t)$.
Hence, the univariate stable polynomial $P_n(t)$ has only real zeros.
To prove the real-rootedness of $K_{n,j}(x)$ and $L_{n,i}(x)$, we develop a general way (Theorem \ref{thm:z}) to obtain generalized Sturm sequences (defined in Section \ref{sec:prelim}) from bivariate stable polynomials.


The remainder of this paper is organized as follows.
In Section~\ref{sec:prelim}, we recall some definitions and results on stable polynomials, including certain linear operators which preserve real stability. We also give a new result (Theorem \ref{thm:z}) which relates  bivariate stable polynomials to generalized Sturm sequences.
Section~\ref{sec:proof} is dedicated to our proof of Theorem \ref{thm:rz}. Our proof is based on the stability of $\bm{\alpha}_n (\bm{x},\bm{y},\bm{z},\bm{w})$.
We also relate $\alpha_n(t,q)$ and $P_n(t)$ to the classical Eulerian polynomial $A_n(t)$ and then prove the stability of $\alpha_n(t,q)$ in an alternative way.
\section{Preliminaries}
\label{sec:prelim}

In this section, we shall give an overview of stable polynomials.
After recalling the definition of stable polynomials, we list some stability-preserving linear operators  which will be used in the paper. We also present Theorem \ref{thm:z}, a general approach to obtain generalized Sturm sequences from bivariate stable polynomials. Our proof of Theorem \ref{thm:z} is similar to that of Newton's inequality and  based on the  Hermite--Biehler Theorem.


Now let us  recall the notion of real stability, which generalizes the
notion of real-rootedness from  univariate real polynomials to multivariate real polynomials.
For a positive integer $n$, let $\bm{x}$ be the $n$-tuple $(x_1, \dots, x_n)$.
Let $\mathbb{H}_+ = \{z \in \mathbb{C} : \mbox{Im}(z)>0 \}$ denote the open upper complex
half-plane.
A polynomial $f \in \mathbb{R}[\bm{x}]$ is said to be (real) \emph{stable} if $f(\bm{x}) \neq 0$  for any
$\bm{x} \in \mathbb{H}_+^n$ or $f$ is identically zero.
Note that a univariate polynomial $f(x) \in \mathbb{R}[x]$ is stable if and only if it has real zeros.
A polynomial $f(\bm{x})$ is said to be \emph{multiaffine} if the power of each indeterminate $x_i$ is at most one.
For a set $\mathcal{P}$ of polynomials, let $\mathcal{P}^{MA}$ be the set of multiaffine polynomials in $\mathcal{P}$.
Borcea and Br{\"a}nd{\'e}n~\cite{Borcea2009Lee} gave a complete characterization of the linear operators which preserve multivariate stable polynomials.
In this paper, we shall use a multiaffine version of  Borcea and Br{\"a}nd{\'e}n's characterization, which shall play a key role in this  paper to prove the stability of  polynomials.

\begin{lem}[{\cite[Theorem 3.5]{Wagner2011Multivariate}}]
	\label{lem:bb}
	Let $T: \mathbb{R}[\bm{x}]^{MA} \to \mathbb{R}[\bm{x}]$ be a linear operator acting on the
	variables $\bm{x}=(x_1, \dots, x_n)$.  If the polynomial
	$$T\left(\prod_{i=1}^{n}(x_i+\hat{x}_i) \right)$$
	is a   stable polynomial  of variables $\bm{x}$ and $\bm{\hat{x}}=(\hat{x}_1,\hat{x}_2,\ldots,\hat{x}_n)$,
	then $T$ preserves real stability.
\end{lem}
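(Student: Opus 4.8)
The plan is to exploit the fact that the symbol $G_T(\bm{x},\bm{\hat{x}})=T\!\left(\prod_{i=1}^n(x_i+\hat{x}_i)\right)$ records the entire action of $T$ on multiaffine inputs, and then to realize the map $f\mapsto T[f]$ as a composition of stability-preserving operations applied to $G_T$. First I would note that $G_T$ is multiaffine in the variables $\bm{\hat{x}}$: the product $\prod_i(x_i+\hat{x}_i)$ is multiaffine in $\bm{\hat{x}}$ and $T$ acts only on $\bm{x}$, so each $\hat{x}_i$ occurs to degree at most one in $G_T$. Expanding $\prod_i(x_i+\hat{x}_i)=\sum_{S\subseteq[n]}\bm{x}^S\bm{\hat{x}}^{[n]\setminus S}$, where $\bm{x}^S=\prod_{i\in S}x_i$, linearity of $T$ gives $G_T(\bm{x},\bm{\hat{x}})=\sum_{S}T[\bm{x}^S]\,\bm{\hat{x}}^{[n]\setminus S}$. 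Since $G_T$ is multiaffine in $\bm{\hat{x}}$, applying $\prod_{i\notin S}\partial_{\hat{x}_i}$ and then setting $\bm{\hat{x}}=\bm{0}$ extracts exactly $T[\bm{x}^S]$. Writing a multiaffine input as $f(\bm{x})=\sum_S c_S\bm{x}^S$ and summing against the coefficients $c_S$ yields the apolarity identity
\[
T[f](\bm{x})=\Big[\,f^c(\partial_{\hat{x}_1},\ldots,\partial_{\hat{x}_n})\,G_T(\bm{x},\bm{\hat{x}})\,\Big]_{\bm{\hat{x}}=\bm{0}},\qquad f^c(\bm{u}):=\sum_{S}c_S\,\bm{u}^{[n]\setminus S}.
\]

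The second step is to check that $f\mapsto f^c$ preserves stability. Since $f^c(\bm{u})=\big(\prod_i u_i\big)f(1/u_1,\ldots,1/u_n)$ and the map $u\mapsto 1/u$ carries $\mathbb{H}_+$ into the lower half-plane, while $f$, having real coefficients, is nonvanishing on the product of lower half-planes as well as on $\mathbb{H}_+^n$, the polynomial $f^c$ is nonvanishing on $\mathbb{H}_+^n$; thus $f$ stable implies $f^c$ stable. (If $f\equiv 0$ then $T[f]=0$ and there is nothing to prove.) With the apolarity identity in hand, the whole theorem reduces to the single claim that, for a stable multiaffine $\phi$ and a stable $P(\bm{x},\bm{\hat{x}})$, the constant-coefficient differential operator $\phi(\partial_{\hat{x}_1},\ldots,\partial_{\hat{x}_n})$ sends $P$ to a stable polynomial or to $0$; the concluding real specialization $\bm{\hat{x}}=\bm{0}$ then preserves stability.

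The heart of the argument — and the step I expect to be the main obstacle — is exactly this differential-contraction claim, which I would handle through the Lieb--Sokal lemma. The clean device is to introduce fresh variables $\bm{v}=(v_1,\ldots,v_n)$ and form $Q(\bm{x},\bm{\hat{x}},\bm{v})=\phi(\bm{v})\,P(\bm{x},\bm{\hat{x}})$, which is stable because it is a product of stable polynomials in disjoint sets of variables, and which has degree at most one in each $v_i$ since $\phi$ is multiaffine. The Lieb--Sokal lemma asserts that replacing a degree-$\le 1$ variable by a partial derivative in one of the remaining variables preserves stability; applying it to substitute $v_i\mapsto\partial_{\hat{x}_i}$ for $i=1,\ldots,n$ in turn transforms $Q$ into $\phi(\partial_{\hat{x}_1},\ldots,\partial_{\hat{x}_n})P$, which is therefore stable or identically zero. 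The stable-or-zero alternative is propagated through each substitution, and the final specialization $\bm{\hat{x}}=\bm{0}$ is a harmless real-point evaluation. Combining the three steps shows that $T[f]$ is stable whenever $f$ is stable and multiaffine, which is the assertion. The delicate point throughout is the careful bookkeeping of the degree-one hypotheses, so that the Lieb--Sokal lemma remains applicable at every substitution.
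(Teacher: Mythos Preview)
The paper does not supply its own proof of this lemma: it is quoted verbatim from Wagner's survey (cited as \cite[Theorem~3.5]{Wagner2011Multivariate}) and used as a black box, so there is no in-paper argument to compare against.

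Your proposal is essentially the standard Borcea--Br\"and\'en proof (the one Wagner presents): encode $T$ by its symbol $G_T$, recover $T[f]$ from $G_T$ via a multiaffine ``apolarity'' identity, and justify the differential contraction by iterated applications of the Lieb--Sokal lemma. The apolarity identity and the reversal $f\mapsto f^c$ are handled correctly. One small imprecision: the Lieb--Sokal lemma in its usual form replaces a degree-one variable by $-\partial_{\hat{x}_i}$, not $+\partial_{\hat{x}_i}$. This is harmless here because you work with \emph{real} stable polynomials, so $\phi(\bm{v})$ stable implies $\phi(-\bm{v})$ stable, and applying Lieb--Sokal to $\phi(-\bm{v})P$ yields $\phi(\partial_{\hat{\bm{x}}})P$ as desired; it would be worth saying this explicitly. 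With that caveat, the argument is sound and matches the literature proof the paper is invoking.
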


Once multivariate  polynomials are shown be stable, we can then reduce
them to real stable univariate polynomials by using the following operations.

\begin{lem}[{\cite[Lemma 2.4]{Wagner2011Multivariate}}]\label{lem:stable}
	Given $i,j \in [n]$, the following operations preserve real stability of $f \in \mathbb{R}[\bm{x}]$:
	\begin{enumerate}
		\item \emph{Differentiation:} $f \mapsto \partial f/\partial x_i.$
		\item \emph{Diagonalization:} $f \mapsto f|_{x_i=x_j}.$
		\item	\emph{Specialization:} for $a \in \mathbb{R}$, $f\mapsto f|_{x_i = a}.$
	\end{enumerate}
\end{lem}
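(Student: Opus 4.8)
The plan is to treat the three operations separately; diagonalization and specialization are essentially immediate, while differentiation is the only case that needs genuine work. Throughout I would lean on two classical facts from complex analysis: \emph{Hurwitz's theorem}, that a locally uniform limit of analytic functions each nowhere zero on a connected domain is either nowhere zero there or identically zero; and the \emph{Gauss--Lucas theorem}, that the zeros of $g'$ lie in the convex hull of the zeros of $g$. Neither appears in the excerpt, but both are standard and I would simply cite them.

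For diagonalization, assume $i \neq j$ (otherwise there is nothing to prove) and let $\tilde f$ be the result of setting $x_i = x_j$ in $f$. Any assignment of the remaining $n-1$ variables to points of $\mathbb{H}_+$ lifts to a point of $\mathbb{H}_+^n$ by giving coordinates $i$ and $j$ their common value; since $f$ is stable it does not vanish there, so $\tilde f \neq 0$, and $\tilde f$ is stable. For specialization at $a \in \mathbb{R}$, say in the variable $x_n$, I would realize $f|_{x_n = a}$ as a limit of stable polynomials: for each $\epsilon > 0$ the polynomial $f_\epsilon(x_1, \ldots, x_{n-1}) = f(x_1, \ldots, x_{n-1}, a + i\epsilon)$ is stable, because $(x_1, \ldots, x_{n-1}, a + i\epsilon) \in \mathbb{H}_+^n$ whenever $(x_1, \ldots, x_{n-1}) \in \mathbb{H}_+^{n-1}$. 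As $\epsilon \to 0^+$ the coefficients of $f_\epsilon$ converge to those of $f|_{x_n = a}$, so the convergence is locally uniform, and Hurwitz's theorem forces the limit to be stable.

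The substantive case is differentiation, say with respect to $x_1$; I may assume $d := \deg_{x_1} f \ge 1$, since otherwise $\partial_{x_1} f = 0$ is trivially stable. The idea is to fix the other variables and reduce to a univariate statement. Fix $(w_2, \ldots, w_n) \in \mathbb{H}_+^{n-1}$ and set $g(z) = f(z, w_2, \ldots, w_n)$. Stability of $f$ says $g(z) \neq 0$ for $z \in \mathbb{H}_+$, so every zero of $g$ lies in the closed lower half-plane $\{z \in \mathbb{C} : \mbox{Im}(z) \le 0\}$, a convex set. By Gauss--Lucas every zero of $g'$ lies in the convex hull of the zeros of $g$, hence again in the closed lower half-plane, so $g'$ has no zero in $\mathbb{H}_+$. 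Since both $w_1 \in \mathbb{H}_+$ and $(w_2, \ldots, w_n) \in \mathbb{H}_+^{n-1}$ were arbitrary, $\partial_{x_1} f(w_1, \ldots, w_n) = g'(w_1) \neq 0$ throughout $\mathbb{H}_+^n$, which is exactly stability.

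The gap I expect to be the main obstacle is a \emph{degenerate fiber}: if $g$ were to collapse to a constant then $g' \equiv 0$ and Gauss--Lucas would give nothing. I would close this by proving that the leading coefficient $g_d(x_2, \ldots, x_n)$ of $f$ in $x_1$ is itself stable. Indeed, for $s > 0$ the rescaled polynomial $s^{-d} f(s x_1, x_2, \ldots, x_n)$ is again stable (scaling a variable by a positive real and scaling $f$ by a positive constant both preserve nonvanishing on $\mathbb{H}_+^n$), and as $s \to +\infty$ it tends locally uniformly to $g_d(x_2, \ldots, x_n)\, x_1^d$; Hurwitz makes this limit stable, and hence $g_d$ does not vanish on $\mathbb{H}_+^{n-1}$. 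Therefore $g$ has degree exactly $d \ge 1$ over every point of $\mathbb{H}_+^{n-1}$, no degenerate fiber occurs, and the Gauss--Lucas argument applies verbatim. By symmetry the same holds for every $\partial_{x_i}$, and iteration handles higher-order derivatives. (For multiaffine $f$ one could instead read differentiation off Lemma~\ref{lem:bb}, since $\partial_{x_i}\prod_{k}(x_k + \hat{x}_k) = \prod_{k \neq i}(x_k + \hat{x}_k)$ is visibly stable, but the direct argument above is needed for arbitrary $f$.)
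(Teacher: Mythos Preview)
The paper does not prove this lemma; it is quoted verbatim from Wagner's survey and used as a black box, so there is no in-paper argument to compare against. Your proof is correct and is essentially the standard one that appears in the cited literature: diagonalization is tautological, specialization at a real point is obtained as a Hurwitz limit of specializations at $a+i\epsilon$, and differentiation is reduced to the univariate Gauss--Lucas theorem by freezing the remaining variables in $\mathbb{H}_+$.

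Your treatment of the one genuine subtlety---ruling out the possibility that the frozen univariate polynomial $g(z)=f(z,w_2,\dots,w_n)$ degenerates to a constant---is also the standard one and is handled correctly: the rescaling $s^{-d}f(sx_1,x_2,\dots,x_n)$ is stable for each $s>0$, its locally uniform limit $g_d(x_2,\dots,x_n)\,x_1^{d}$ is therefore stable by Hurwitz, and since $g_d\not\equiv 0$ this forces $g_d$ to be nonvanishing on $\mathbb{H}_+^{\,n-1}$. Two cosmetic remarks: in the specialization step the intermediate polynomials $f_\epsilon$ have complex coefficients, so strictly speaking you are using the complex notion of stability there (nonvanishing on $\mathbb{H}_+^{\,n-1}$), which is fine since Hurwitz does not care and the limit regains real coefficients; and Hurwitz is usually phrased for sequences, so one should pass to $\epsilon_k\downarrow 0$ and $s_k\uparrow\infty$, a purely formal adjustment.
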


Given two real-rooted polynomials $f(x)$ and $g(x)$ with positive leading coefficients,  we say that  {$g(x)$ \textit{interlaces} $f(x)$}, denoted $g(x)\sep f(x)$, if
\begin{align*}
\cdots\le s_2\le r_2\le s_1\le r_1,
\end{align*}
where $\{r_j\}$ and $\{s_k\}$ are the sets of zeros of $f(x)$ and $g(x)$, respectively.
A sequence $\{f_n(x)\}_{n\geq 0}$ of real polynomials with positive leading coefficients is said to  be \emph{a generalized Sturm sequence} if  $\deg f_n(x)=n$  and $f_n(x)\sep f_{n+1}(x)$.

The following theorem  provides a general approach to obtain generalized Sturm sequences from bivariate stable polynomials.

 \begin{thm}\label{thm:z}
Suppose that  $F(x,y)=\sum_{j=0}^n f_j(x)y^j$ is a bivariate polynomial with real coefficients. If  $F(x,y)$ is stable, then  the polynomial $f_j(x)$ has only real zeros  for any $0\le j \le n$ and moreover $\{f_{n-j}(x)\}_{j=0}^{n}$ forms a generalized Sturm sequence.
\end{thm}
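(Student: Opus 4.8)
The plan is to split the statement into two independent assertions and attack them with different tools: the real-rootedness of each coefficient polynomial $f_j(x)$ follows immediately from the stability-preserving operations already recorded in Lemma~\ref{lem:stable}, whereas the interlacing of consecutive coefficients will be squeezed out of the two-variable stability by a reduction modeled on the classical proof of Newton's inequality, with the Hermite--Biehler theorem converting a ``linear-in-$y$'' stable pencil into an interlacing relation.

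First I would establish the real-rootedness of every $f_j(x)$. Since
\[
f_j(x)=\frac{1}{j!}\left.\frac{\partial^{\,j}F}{\partial y^{\,j}}\right|_{y=0},
\]
and both differentiation with respect to $y$ and the specialization $y=0$ preserve real stability by Lemma~\ref{lem:stable}, the polynomial $j!\,f_j(x)$ is a real stable polynomial in the single variable $x$ and hence has only real zeros. This settles the first claim and, just as importantly, guarantees that no $f_j(x)$ vanishes for $x\in\mathbb{H}_+$, a fact used repeatedly below. Next I would isolate two consecutive coefficients. Fix $1\le k\le n$. Differentiating $F$ exactly $k-1$ times in $y$ and dividing by $(k-1)!$ yields a stable polynomial whose two lowest $y$-terms are $f_{k-1}(x)+k\,f_k(x)\,y$. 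I would then reverse it in $y$ by applying $y\mapsto -1/y$ and clearing denominators, which preserves stability because $y\mapsto -1/y$ maps $\mathbb{H}_+$ to itself, and finally differentiate the reversal enough times in $y$ to annihilate all but the top two terms. Tracking the binomial factors shows that the surviving stable polynomial is, up to a positive scalar, the linear pencil
\[
R_k(x,y)=(n-k+1)\,f_{k-1}(x)\,y-k\,f_k(x),
\]
the bivariate analogue of the quadratic to which one reduces in Newton's inequality.

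The interlacing is then read off from the stability of $R_k$. For $x\in\mathbb{H}_+$ we have $f_{k-1}(x)\ne 0$, so $R_k(x,y)=0$ forces $y=\tfrac{k\,f_k(x)}{(n-k+1)\,f_{k-1}(x)}$; stability of $R_k$ says this value never lies in $\mathbb{H}_+$, i.e. $\operatorname{Im}\bigl(f_k(x)/f_{k-1}(x)\bigr)\le 0$ throughout $\mathbb{H}_+$. This is exactly the Hermite--Biehler condition characterizing interlacing of real-rooted polynomials, and a partial-fraction/residue computation fixes the orientation as $f_k(x)\sep f_{k-1}(x)$. Letting $k$ run from $1$ to $n$ produces the chain $f_n\sep f_{n-1}\sep\cdots\sep f_0$, so reindexing $h_j:=f_{n-j}$ gives $h_j\sep h_{j+1}$ for every $j$. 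Because each interlacing step forces $\deg f_{k-1}-\deg f_k\in\{0,1\}$, and the endpoints satisfy $\deg f_n=0$ and $\deg f_0=n$ (as holds for the polynomials to which the theorem is applied), all $n$ steps must drop the degree by exactly one; hence $\deg h_j=j$ and, together with the positivity of leading coefficients required by the definition of $\sep$, we obtain a genuine generalized Sturm sequence.

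The routine parts are the two applications of Lemma~\ref{lem:stable} and the binomial bookkeeping in the reversal step. The hard part will be the third paragraph: confirming that the reduced pencil $R_k$ really is stable, which amounts to checking that the reversal and the repeated differentiations have been tracked with the correct signs and constants, and then translating that stability into interlacing with the right orientation through Hermite--Biehler. A secondary point to handle carefully is the degree bookkeeping, since the conclusion $\deg h_j=j$ genuinely needs the endpoint degrees $\deg f_0=n$ and $\deg f_n=0$ in addition to the interlacing chain; interlacing alone only pins these degrees down to an interval.
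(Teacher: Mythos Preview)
Your proposal is correct and follows essentially the same route as the paper: differentiate in $y$, reverse via $y\mapsto -1/y$, differentiate again to isolate a linear pencil in $y$, and convert its stability into interlacing via Hermite--Biehler. The only cosmetic differences are that the paper applies Lemma~\ref{lem:HB} directly to the pencil $f_k(x)+y\,f_{k+1}(x)$ (specializing $y=i$) rather than arguing through $\operatorname{Im}\bigl(f_k/f_{k-1}\bigr)$, and it does not separate out the real-rootedness or the degree bookkeeping that you flag; your added care on those points is welcome, since the degree condition in the definition of a generalized Sturm sequence is indeed only secured by the specific polynomials to which the theorem is later applied.
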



In order to prove Theorem \ref{thm:z}, we need the Hermite--Biehler Theorem, which reveals the close connection between interlacing and stability.
\begin{lem}[{Hermite--Biehler, \cite[Th. 6.3.4]{Rahman2002Analytic}}]\label{lem:HB}
Let $f(z)$ and $g(z)$ be two non-constant polynomials with real coefficients.
 Then the following statements are equivalent:
 \begin{itemize}
 	\item  $f(z)$ and $g(z)$ have only real zeros, and moreover $g(z)  \sep f(z)$;
 	\item the polynomial $f(z)+ig(z)$ is stable.
 \end{itemize}
\end{lem}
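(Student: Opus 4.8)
The plan is to establish the two assertions in turn: that every coefficient $f_j(x)$ is real-rooted, and that consecutive coefficients interlace, the latter being the real content of the generalized Sturm property.

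For real-rootedness I would simply extract each $f_j$ from $F$ by operations known to preserve stability. Writing $f_j(x)=\tfrac{1}{j!}\,\partial_y^{\,j}F(x,y)\big|_{y=0}$, both the differentiation in the variable $y$ (Lemma~\ref{lem:stable}(1)) and the specialization $y=0$ (Lemma~\ref{lem:stable}(3)) preserve real stability, so $f_j(x)$ is a stable univariate polynomial and therefore has only real zeros.

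For the interlacing I would, in the spirit of the classical proof of Newton's inequality, isolate two consecutive coefficients inside a stable polynomial that is affine in $y$ and then appeal to Hermite--Biehler. Fix $j$ with $0\le j\le n-1$ and set $G=\partial_y^{\,j}F$, which is stable of $y$-degree $D=n-j$ and has the expansion $G=j!\,f_j+(j+1)!\,f_{j+1}\,y+O(y^2)$. Differentiation has already removed the coefficients of index below $j$; to remove those above $j+1$ I would apply the inversion $Q(x,y)=y^{D}G(x,-1/y)$, which is again stable because $y\mapsto -1/y$ maps $\mathbb{H}_+$ into itself, and then differentiate $D-1$ more times. The outcome $R=\partial_y^{\,D-1}Q$ is a stable polynomial with real coefficients, affine in $y$, namely $R(x,y)=A(x)+y\,B(x)$ where $A$ is a negative multiple of $f_{j+1}$ and $B$ a positive multiple of $f_j$. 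Since $i\in\mathbb{H}_+$, stability gives $R(x,i)=A(x)+iB(x)\neq 0$ for all $x\in\mathbb{H}_+$; multiplying by $-i$ produces $B(x)+i(-A(x))$, whose real and imaginary parts are positive multiples of $f_j$ and $f_{j+1}$ respectively. Hermite--Biehler (Lemma~\ref{lem:HB}) then gives $f_{j+1}\sep f_j$. Letting $j$ range over $0,\dots,n-1$ yields the chain $f_n\sep f_{n-1}\sep\cdots\sep f_0$, which together with the real-rootedness is exactly the statement that $\{f_{n-j}(x)\}_{j=0}^n$ is a generalized Sturm sequence.

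The step I expect to demand the most care is fixing the direction of the interlacing. Read literally on $A+iB$, Hermite--Biehler would return $B\sep A$; this is reconciled with the degrees only after the rotation by $-i$, which swaps the roles of the real and imaginary parts and lets the higher-degree coefficient play the role of $f$ in the lemma. Tracking which of $f_j,f_{j+1}$ is the higher-degree polynomial, and disposing of the degenerate cases in which some $f_j$ is constant or identically zero, is the principal bookkeeping obstacle; a smaller point is to record that the reversal $y\mapsto -1/y$ preserves stability, a standard fact not listed among the operations of Lemma~\ref{lem:stable}.
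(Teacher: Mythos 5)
There is a genuine gap here, and it is of a global kind: you have not proved the statement you were asked to prove. The statement is the Hermite--Biehler theorem itself (Lemma~\ref{lem:HB}), the equivalence between ``$f$ and $g$ are real-rooted with $g \sep f$'' and ``$f(z)+ig(z)$ is stable.'' Your argument instead proves Theorem~\ref{thm:z} of the paper --- that the coefficients of a bivariate stable polynomial $F(x,y)=\sum_j f_j(x)y^j$ form a generalized Sturm sequence --- and it does so by the same route the paper takes for that theorem (differentiate in $y$, invert via $y\mapsto -1/y$, differentiate again to isolate two consecutive coefficients in a polynomial affine in $y$, then apply Hermite--Biehler). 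In particular, your final step explicitly invokes ``Hermite--Biehler (Lemma~\ref{lem:HB})'' as a black box. As a proof of Lemma~\ref{lem:HB} this is circular: you assume the very equivalence you are supposed to establish. The paper itself offers no internal proof of this lemma --- it is quoted from Rahman and Schmeisser \cite[Th.~6.3.4]{Rahman2002Analytic} --- so there is no in-paper argument your text could be matching.

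What a genuine proof of the lemma would require is independent content about the two univariate polynomials $f$ and $g$. For instance, for the direction ``interlacing $\Rightarrow$ stability'': if $f,g$ have positive leading coefficients, only real zeros, and $g \sep f$, then the partial-fraction expansion of $g/f$ has nonnegative residues at the (real) zeros of $f$, whence $\operatorname{Im}\bigl(g(z)/f(z)\bigr)<0$ for all $z\in\mathbb{H}_+$; since $f$ has no zeros in $\mathbb{H}_+$, this forces $f(z)+ig(z)\neq 0$ there. For the converse one shows that stability of $f+ig$ makes $-g/f$ a function mapping $\mathbb{H}_+$ to $\mathbb{H}_+$ (a Herglotz/Pick-type argument), which forces the zeros of $f$ and $g$ to be real and to interlace in the stated order. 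None of this machinery appears in your proposal; what you wrote is a correct (and careful, especially on the direction-of-interlacing bookkeeping) reproduction of the paper's proof of Theorem~\ref{thm:z}, but it cannot stand as a proof of Lemma~\ref{lem:HB}.
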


Now we are at the position to give  a proof of Theorem \ref{thm:z}.
Note that our proof is similar to that of Newton's inequality, see \cite[p.  104]{Hardy1952Inequalities}.

\begin{proof}[Proof of Theorem \ref{thm:z}]
By taking the $k$-th order partial derivative with respect to $y$ of the real stable polynomial $$F(x,y)=\sum_{j=0}^n f_j(x)y^j,$$ it follows from Lemma \ref{lem:stable} that
$$\sum_{j=k}^{n} (j)_{k}f_j(x)y^{j-k} $$
is real stable, where $(j)_k=j(j-1)\cdots (j-k+1)$.
Note that if $y\in \mathbb{H}_+$ then $-\cfrac{1}{y} \in \mathbb{H}_+$. Hence, we obtain the real stability of
\begin{align}\label{eq:r}
	y^{n-k} \sum_{j=k}^{n} (j)_{k}f_j(x)(-\frac{1}{y})^{j-k} = \sum_{j=k}^{n} (-1)^{j-k} (j)_{k}f_j(x) y^{n-j}.
\end{align}
Similarly, by taking the $(n-k-1)$-th  order the partial derivative  with respect to $y$ of \eqref{eq:r}, we get the real stability of
 \begin{align*}
 	& \sum_{j=k}^{k+1} (-1)^{j-k} (j)_{k} (n-j)_{n-k-1} f_j(x) y^{k-j+1} \\
 	= &\  k!(n-k)! f_k(x)y - (k+1)! (n-k-1)! f_{k+1}(x) \\
 	= &\   (k+1)!(n-k-1)! \left(  \frac{n-k}{k+1} f_k(x)y  -  f_{k+1}(x) \right).
 \end{align*}
Replacing $y$ by $\cfrac{k+1}{n-k}\, y$, it follows that
$$f_{k+1}(x)-f_k(x)y$$
is real stable and so is
$$y(f_{k+1}(x)-f_k(x)(-\frac{1}{y}))= f_k(x)+ y f_{k+1}(x).$$
Therefore, it follows from Lemma \ref{lem:HB} that  $f_{k+1}(x) \sep f_k(x)$.
This completes the proof of Theorem \ref{thm:z}.
\end{proof}

\section{Proof of  Theorem \ref{thm:rz}}\label{sec:proof}
The main objective of this section is to prove Theorem \ref{thm:rz}.
We first recall Br\"and\'en's  multivariate Eulerian polynomial and then introduce a multivariate refinement of Nunge's Eulerian polynomials on segmented permutations.
We next show that it can be obtained by acting on Br\"and\'en's  multivariate Eulerian polynomial with  a stability-preserving linear operator.

Before presenting our result, let us first recall Br\"and\'en's  multivariate Eulerian polynomial.
Given a permutation $\pi \in \mathfrak{S}_n$, let
\begin{align*}
\mathcal{DT} (\pi) & = \{\pi_i : \pi_i > \pi_{i+1}\} \quad  \mbox{and}\\[5pt]
\mathcal{AT} (\pi) & = \{\pi_{i+1} :  \pi_i<\pi_{i+1}\}
\end{align*}
be  the  \emph{descent top} set and the \emph{ascent top} set, respectively.
For $\mathcal{T}$ a set with entries from $[n]$, we let $\bm{x}^{\mathcal{T}} = \prod_{i \in \mathcal{T}} x_i$.
Br\"and\'en defined a  real multivariate  polynomial  $\bm{A}_n (\bm{x},\bm{y})$ as follows:
\begin{equation}\label{eq:branden}
\bm{A}_n (\bm{x},\bm{y}) = \sum_{\pi \in \mathfrak{S}_n}  w(\pi),
\mbox{  where  }  w(\pi) =
\bm{x}^{\mathcal{DT} (\pi)} \bm{y}^{\mathcal{AT} (\pi)}.
\end{equation}
For example, $w(251634) = y_5 x_5 y_6 x_6 y_4$.
Clearly, $\bm{A}_n (\bm{x},\bm{y})$ is multiaffine.
Br\"and\'en  proved the following result.
\begin{lem}\label{thm:branden}
	For any positive integer $n$, the polynomial $\bm{A}_n (\bm{x},\bm{y})$
	is stable.
\end{lem}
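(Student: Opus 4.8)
The plan is to invoke Lemma~\ref{lem:bb} (the multiaffine Borcea--Br\"and\'en characterization) and exhibit a linear operator $T$ whose action on $\prod_{i=1}^n (x_i + \hat x_i)$ recovers $\bm{A}_n(\bm{x},\bm{y})$ in a provably stable form. The key observation is that $\bm{A}_n(\bm{x},\bm{y})$ is multiaffine (as noted in the excerpt), so each variable $x_i$ appears with degree at most one, and similarly each $y_i$; since every letter of a permutation is either a descent top, an ascent top, or the first letter, the monomial $w(\pi)$ records for each value $i\in[n]$ a well-defined ``role.'' First I would set up a single family of indeterminates, one per value $i$, and build an operator that distributes the weight $x_i$ versus $y_i$ according to this role. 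The technically cleanest route is to find a recursion for $\bm{A}_n$ in terms of $\bm{A}_{n-1}$ obtained by inserting the largest letter $n$ into a permutation of $[n-1]$, and to show that each insertion step is governed by a stability-preserving operator of the type covered by Lemma~\ref{lem:stable} together with Lemma~\ref{lem:bb}.

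Concretely, the second step is to analyze how $w(\pi)$ changes under the standard insertion map $\mathfrak{S}_{n-1}\to\mathfrak{S}_n$ that places $n$ into one of the $n$ available slots. Inserting $n$ into a descent slot, an ascent slot, or at either end alters the descent-top/ascent-top sets in a controlled, local way: the newly inserted $n$ always becomes a descent top unless it sits at the very end, and it may convert the element immediately to its right from an ascent top to a descent top (or vice versa). Summing $w$ over all insertion positions then expresses $\bm{A}_n$ as $D_n\bigl(\bm{A}_{n-1}\bigr)$ for an explicit operator $D_n$ built from multiplication by the new variables $x_n,y_n$ and from the substitutions/differentiations catalogued in Lemma~\ref{lem:stable}. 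I would then verify, by a direct computation on the generating product, that $D_n$ satisfies the hypothesis of Lemma~\ref{lem:bb}, i.e.\ that $D_n\bigl(\prod (x_i+\hat x_i)\bigr)$ is stable; this is the crux and reduces the whole argument to a finite, checkable stability claim at each level.

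The induction then closes: the base case $\bm{A}_1(\bm{x},\bm{y})=1$ is trivially stable, and if $\bm{A}_{n-1}$ is stable then applying the stability-preserving operator $D_n$ yields the stability of $\bm{A}_n$. The main obstacle I anticipate is not the inductive bookkeeping but isolating the correct operator $D_n$ so that its action on the product of linear forms is transparently stable. Because $\bm{A}_n$ is multiaffine, Lemma~\ref{lem:bb} is exactly the right tool, but writing $D_n$ as an honest linear operator on $\mathbb{R}[\bm{x},\bm{y}]^{MA}$ requires carefully tracking how the role of the right-neighbor of $n$ flips; a misstatement there would break multiaffineness or stability. I expect the cleanest formulation to combine a ``promotion'' substitution (sending some $y_j \mapsto x_j$, reflecting an ascent top becoming a descent top) with multiplication by $x_n$ or $y_n$, and to verify stability of the resulting test polynomial by recognizing it as a product of stable factors or by appealing directly to Lemma~\ref{lem:stable}.

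If the insertion recursion proves awkward to render as a single stability-preserving operator, the fallback is to cite Br\"and\'en's original argument directly, since Lemma~\ref{thm:branden} is attributed to him; but the self-contained route above, grounded in Lemmas~\ref{lem:bb} and~\ref{lem:stable}, is the approach I would pursue first because it keeps the paper's machinery uniform and foreshadows the operator-based constructions used later for $\bm{\alpha}_n(\bm{x},\bm{y},\bm{z},\bm{w})$.
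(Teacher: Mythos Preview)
The paper does not actually prove Lemma~\ref{thm:branden}; it merely states the result and refers the reader to \cite{Braenden2011Proof, ChenContext, Haglund2012Stable, Visontai2013Stable} for a proof. So your ``fallback'' of citing Br\"and\'en is precisely what the paper does.

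As for your primary plan: the insertion-of-$n$ recursion together with a check that the resulting operator preserves stability is exactly the standard argument given in the cited references (notably Haglund--Visontai), so the route is sound in principle. That said, what you have written is a sketch, not a proof. The recursion you describe is not quite accurate: when $n$ is inserted between $a$ and $b$, the value $n$ becomes \emph{both} an ascent top (since $a<n$) and a descent top (since $n>b$), contributing $x_n y_n$; simultaneously the old contribution from the slot $(a,b)$---either $x_a$ if $a>b$ or $y_b$ if $a<b$---is removed. The boundary insertions (at the far left or far right) contribute only $x_n$ or only $y_n$. Working this out yields an operator of the form
\[
D_n \;=\; y_n \;+\; x_n y_n \sum_{j=2}^{n-1}\Bigl(\tfrac{\partial}{\partial x_j} + \tfrac{\partial}{\partial y_j}\Bigr) \;+\; x_n,
\]
and it is this operator (or an equivalent reformulation) whose stability-preservation must be verified via Lemma~\ref{lem:bb}. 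Your description of a ``promotion substitution $y_j\mapsto x_j$'' does not match what actually happens and would lead you astray. If you want a self-contained proof, you need to pin down $D_n$ correctly and then carry out the test-polynomial computation; otherwise, simply citing the references as the paper does is entirely appropriate here.
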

For a proof of Lemma \ref{thm:branden} and further generalizations, we refer the reader  to~\cite{Braenden2011Proof, ChenContext,  Haglund2012Stable, Visontai2013Stable}.
By diagonalizing the  variables $x_i$ to $x$,  specializing $y_i$ to $1$, it follows that
	\[A_n(x) = \sum_{\pi \in  \mathfrak{S}_n} x^{|\mathcal{DT} (\pi)|} = \sum_{\pi \in  \mathfrak{S}_n} x^{\des (\pi)}\] is stable.
Since $A_n(x)$ is univariate, it is equivalent to say that $A_n(x)$ has only real zeros.

We next give our multivariate Eulerian polynomials on segmented permutations.
Given a segmented permutation $\sigma \in \SP_n$, let
	\begin{align*}
	\mathcal{DT} (\sigma) & = \{\sigma_i :  \sigma_i > \sigma_{i+1} \mbox{ and there is no bar in the slot  between } \sigma_i \mbox{ and } \sigma_{i+1} \},\\[5pt]
	\mathcal{AT} (\sigma) & = \{\sigma_{i+1} : \sigma_i < \sigma_{i+1} \mbox{ and there is no bar in the slot between } \sigma_i \mbox{ and } \sigma_{i+1} \},\\[5pt]
	\mathcal{DTS} (\sigma) & = \{\sigma_i : \sigma_i > \sigma_{i+1} \mbox{ and there is a bar in the slot between } \sigma_i \mbox{ and } \sigma_{i+1} \}, \mbox{ and}\\[5pt]
	\mathcal{ATS} (\sigma) & = \{\sigma_{i+1} : \sigma_i < \sigma_{i+1} \mbox{ and there is a bar in the slot  between } \sigma_i \mbox{ and } \sigma_{i+1} \}
\end{align*}
be  the  \emph{descent top} set,  the \emph{ascent top} set,  the \emph{descent top segment} set and the \emph{ascent top segment} set, respectively.
Let $\bm{\alpha}_n (\bm{x},\bm{y},\bm{z},\bm{w})$ be a real multivariate multiaffine polynomial defined as
\begin{equation}\label{eq:main}
\bm{\alpha}_n (\bm{x},\bm{y},\bm{z},\bm{w})
= \sum_{\sigma \in \SP_n}  w'(\sigma),  \quad
\mbox{  where  }  w'(\sigma) =
\bm{x}^{\mathcal{DT} (\sigma)} \bm{y}^{\mathcal{AT} (\sigma)}
\bm{z}^{\mathcal{DTS} (\sigma)} \bm{w}^{\mathcal{ATS} (\sigma)}.
\end{equation}
For example, $w' (2|516|34) = w_5 x_5 y_6 z_6 y_4$.
The main result of this section is stated as follows.
\begin{thm}\label{thm:stable}
	For any positive integer $n$,  the polynomial $\bm{\alpha}_n (\bm{x},\bm{y},\bm{z},\bm{w})$ is stable.
\end{thm}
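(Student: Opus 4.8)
The plan is to exhibit $\bm{\alpha}_n$ as the image of Br\"and\'en's polynomial $\bm{A}_n$ under an explicit substitution operator, and then to check that this operator preserves stability; the result then follows at once from Lemma~\ref{thm:branden}. Concretely, I would introduce the linear operator
\[
T : \mathbb{R}[\bm{x},\bm{y}]^{MA} \to \mathbb{R}[\bm{x},\bm{y},\bm{z},\bm{w}], \qquad T(f) = f(x_1+z_1,\dots,x_n+z_n,\, y_1+w_1,\dots,y_n+w_n),
\]
i.e.\ the substitution $x_i \mapsto x_i + z_i$ and $y_i \mapsto y_i + w_i$, and claim that $\bm{\alpha}_n(\bm{x},\bm{y},\bm{z},\bm{w}) = T(\bm{A}_n)$.

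First I would prove this identity combinatorially, which is the heart of the matter. The key observation is that each $\sigma \in \SP_n$ is specified by its underlying permutation $\pi \in \mathfrak{S}_n$ together with a choice of which of the $n-1$ slots to bar, and that placing a bar does not alter whether a slot is a value-descent ($\pi_i > \pi_{i+1}$) or a value-ascent ($\pi_i < \pi_{i+1}$)---the bar only toggles the slot's contribution between the $x$/$z$ pair (on a descent) or the $y$/$w$ pair (on an ascent). Consequently, for fixed $\pi$ the bar choices at distinct slots are independent and $\sum w'(\sigma)$ factors slot by slot:
\[
\sum_{\substack{\sigma \in \SP_n \\ \text{with underlying } \pi}} w'(\sigma) = \prod_{v \in \mathcal{DT}(\pi)} (x_v + z_v) \prod_{v \in \mathcal{AT}(\pi)} (y_v + w_v) = T\big(w(\pi)\big).
\]
Summing over $\pi$ and using linearity of $T$ gives $\bm{\alpha}_n = T(\bm{A}_n)$; one may sanity-check on the running example that $w'(2|516|34) = w_5 x_5 y_6 z_6 y_4$ is precisely one term in the expansion of $T(w(251634)) = (x_5+z_5)(y_5+w_5)(x_6+z_6)(y_6+w_6)(y_4+w_4)$.

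Finally I would verify that $T$ preserves stability, and here the cleanest route is direct. If $(\bm{x},\bm{y},\bm{z},\bm{w}) \in \mathbb{H}_+^{4n}$, then each $x_i + z_i$ and each $y_j + w_j$ again lies in $\mathbb{H}_+$, since imaginary parts add; hence $T(\bm{A}_n)$ evaluated at such a point equals $\bm{A}_n$ evaluated at a point of $\mathbb{H}_+^{2n}$, which is nonzero by Lemma~\ref{thm:branden}. Thus $\bm{\alpha}_n = T(\bm{A}_n)$ is stable. Alternatively, one can fit $T$ into the framework of Lemma~\ref{lem:bb}, since $T\big(\prod_i (x_i+\hat x_i)\prod_j(y_j+\hat y_j)\big) = \prod_i (x_i+z_i+\hat x_i)\prod_j(y_j+w_j+\hat y_j)$ is a product of linear forms with nonnegative coefficients and so is stable. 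I expect the main obstacle to lie not in the stability step but in making the combinatorial identity fully rigorous: one must argue carefully that the descent-top/ascent-top data of $\pi$ is exactly what dictates which of the four variable families each slot feeds, and that summing over all $2^{n-1}$ bar-placements collapses to the product above.
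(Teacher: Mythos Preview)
Your proof is correct and follows essentially the same strategy as the paper: express $\bm{\alpha}_n$ as an operator applied to $\bm{A}_n$, and verify that this operator preserves stability. In fact your substitution $x_j\mapsto x_j+z_j$, $y_j\mapsto y_j+w_j$ is \emph{literally} the paper's operator $\prod_j(1+w_j\,\partial/\partial y_j)(1+z_j\,\partial/\partial x_j)$ once one restricts to multiaffine polynomials, so the combinatorial identities coincide. The one genuine difference is in the stability step: the paper invokes the Borcea--Br\"and\'en criterion (Lemma~\ref{lem:bb}) to certify that each factor $1+z_j\,\partial/\partial x_j$ preserves stability, whereas you observe directly that $\mathbb{H}_+$ is closed under addition, so the substituted point lies in $\mathbb{H}_+^{2n}$ and $\bm{A}_n$ is already known to be nonvanishing there. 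Your argument is the more elementary of the two and sidesteps the need for Lemma~\ref{lem:bb} entirely; the paper's formulation, on the other hand, fits the result into the general machinery of stability-preserving linear operators, which may be preferable if one wants to iterate or generalize.
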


\begin{proof}
To prove  this theorem, we first establish an identity, which relates the polynomial $\bm{\alpha}_n (\bm{x},\bm{y},\bm{z},\bm{w})$ to Br\"and\'en's polynomial $\bm{A}_n (\bm{x},\bm{y})$.
From the definition of $\bm{A}_n (\bm{x},\bm{y})$, it is clear to see that for any permutation $\pi$ there is a one-to-one correspondence between variables appearing in $w(\pi)$ and slots of two adjacent letters in  $\pi$.
Given a permutation $\pi \in \mathfrak{S}_n$, we can generate a segmented permutation $\sigma \in  \SP_n$ by deciding whether to insert a bar in every slot between two adjacent letters.
If a bar is inserted in a slot followed by a descent top (respectively, preceding an ascent top), namely $i$, then the descent top (respectively, the ascent top) will be replaced by a descent top segment (respectively,  an ascent top segment) and hence the corresponding variable $x_i$ (respectively, $y_i$) will be replaced by $z_i$ (respectively, $w_i$).
Then, for any segmented permutation $\sigma$ there is  a one-to-one correspondence between  variables appearing in $w'(\sigma)$ and slots of two adjacent letters  in $\sigma$.
Hence, 
together with the fact that both $\bm{A}_n (\bm{x},\bm{y})$ and $\bm{\alpha}_n (\bm{x},\bm{y},\bm{z},\bm{w}) $ are multiaffine, 
we obtain that
\begin{align}\label{eq:aA}
	\bm{\alpha}_n (\bm{x},\bm{y},\bm{z},\bm{w}) = \prod_{j=2}^{n}   (1+ w_j \frac{\partial}{\partial y_j}) (1+z_j \frac{\partial}{\partial x_j}) \bm{A}_n (\bm{x},\bm{y}).
\end{align}

We next prove the stability of $\bm{\alpha}_n (\bm{x},\bm{y},\bm{z},\bm{w})$ via \eqref{eq:aA}.
For  $2\le j \le n$, let $T_j= 1+z_j \frac{\partial}{\partial x_j}$  be a linear operator defined on $\mathbb{R}[\bm{x},\bm{y},\bm{z},\bm{w}]^{MA}$.
Since the polynomial
\begin{align*}
&T_j \big(  \prod_{i=1}^{n} (x_i+ \hat{x}_i) (y_i+ \hat{y}_i)
(z_i+ \hat{z}_i) (w_i+ \hat{w}_i)  \big)\\
=  &\   (x_j+\hat{x}_j+ z_j)  \prod_{k\neq j} (x_k+\hat{x}_k) \times \prod_{i=1}^{n} (y_i+ \hat{y}_i)
(z_i+ \hat{z}_i) (w_i+ \hat{w}_i)
\end{align*}
is stable,
it follows from Lemma~\ref{lem:bb} that the linear operator  $T_j$ preserves  stability, 
Similarly, the linear operator  $1+w_j \frac{\partial}{\partial y_j}$ defined on $\mathbb{R}[\bm{x},\bm{y},\bm{z},\bm{w}]^{MA}$ also preserves stability for any $2\le j \le n$. Therefore, we obtain that their product  $\prod_{j=2}^{n}   (1+ w_j \frac{\partial}{\partial y_j}) (1+z_j \frac{\partial}{\partial x_j})$ preserves stability.
Since $\bm{A}_n (\bm{x},\bm{y})$ is stable, we get the desired stable property of $\bm{\alpha}_n (\bm{x},\bm{y},\bm{z},\bm{w})$ by~\eqref{eq:aA}. This completes the proof.
\end{proof}

By diagonalizing the  variables $x_i$ to $x$,  specializing $y_i$ to $1$, and substituting $z_i$ and $w_i$ by $t$, it follows that

\begin{thm}\label{thm:as}
	For any positive integer $n$, the polynomial
\begin{align*}
	\alpha_n(t,q) = \sum_{\sigma \in \SP_n} t^{\des(\sigma)}q^{\seg(\sigma)}
\end{align*}
	is stable.
\end{thm}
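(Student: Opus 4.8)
The plan is to obtain the stability of $\alpha_n(t,q)$ directly from the stability of the multivariate polynomial $\bm{\alpha}_n (\bm{x},\bm{y},\bm{z},\bm{w})$ established in Theorem \ref{thm:stable}, by reading the phrase ``diagonalizing the $x_i$, specializing the $y_i$, and substituting the $z_i$ and $w_i$'' as a composition of the stability-preserving operations catalogued in Lemma \ref{lem:stable}. The point is that each prescribed substitution can be realized through the diagonalization and specialization operations, every one of which preserves real stability, so that the end product is automatically stable.

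Concretely, I would first apply diagonalization (operation (2) of Lemma \ref{lem:stable}) iteratively to identify $x_2 = x_3 = \cdots = x_n$ and rename this common variable $t$; this collapses each factor $\bm{x}^{\mathcal{DT}(\sigma)} = \prod_{i\in \mathcal{DT}(\sigma)} x_i$ to $t^{|\mathcal{DT}(\sigma)|}$. I would then use diagonalization again to identify all of $z_2,\dots,z_n,w_2,\dots,w_n$ with one variable, renamed $q$, collapsing $\bm{z}^{\mathcal{DTS}(\sigma)}\bm{w}^{\mathcal{ATS}(\sigma)}$ to $q^{|\mathcal{DTS}(\sigma)|+|\mathcal{ATS}(\sigma)|}$. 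Finally I would apply specialization (operation (3)) to set every $y_i=1$, which kills the factor $\bm{y}^{\mathcal{AT}(\sigma)}$. Since each of these steps preserves real stability and $\bm{\alpha}_n (\bm{x},\bm{y},\bm{z},\bm{w})$ is stable, the resulting bivariate polynomial in $t$ and $q$ is stable.

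It then remains to verify the exponent bookkeeping, namely that this bivariate polynomial is exactly $\alpha_n(t,q)$. By definition $|\mathcal{DT}(\sigma)|$ counts the unbarred descents of $\sigma$, which is precisely $\des(\sigma)$, so the exponent of $t$ is $\des(\sigma)$. For the $q$ exponent, each barred slot of $\sigma$ lies between a pair $\sigma_i,\sigma_{i+1}$ with either $\sigma_i>\sigma_{i+1}$ (contributing $\sigma_i$ to $\mathcal{DTS}(\sigma)$) or $\sigma_i<\sigma_{i+1}$ (contributing $\sigma_{i+1}$ to $\mathcal{ATS}(\sigma)$), and these two cases are mutually exclusive since the two entries of a slot are distinct; as distinct slots give distinct values, it follows that $|\mathcal{DTS}(\sigma)|+|\mathcal{ATS}(\sigma)|$ equals the number of bars, that is $\seg(\sigma)$. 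Summing $t^{\des(\sigma)}q^{\seg(\sigma)}$ over $\sigma\in\SP_n$ recovers $\alpha_n(t,q)$, which is therefore stable.

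The hard part, such as it is, will be justifying that ``substituting $z_i$ and $w_i$ by a single new variable'' is a legitimate stability-preserving move: this is not a specialization to a real constant but a merging of several variables, so it must be presented as an iterated diagonalization followed by a harmless renaming, and one should note that Lemma \ref{lem:stable} permits diagonalizing across the different blocks of variables. The combinatorial identification of the two exponents with $\des$ and $\seg$ is routine once one observes that every barred slot contributes to exactly one of the two segment-top sets.
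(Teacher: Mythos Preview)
Your proposal is correct and follows exactly the route the paper takes: the paper derives Theorem~\ref{thm:as} from Theorem~\ref{thm:stable} in a single sentence by diagonalizing the $x_i$, specializing the $y_i$ to $1$, and diagonalizing the $z_i,w_i$, and you have simply unpacked that sentence carefully, including the exponent bookkeeping $|\mathcal{DT}(\sigma)|=\des(\sigma)$ and $|\mathcal{DTS}(\sigma)|+|\mathcal{ATS}(\sigma)|=\seg(\sigma)$. Your remark that the ``substitution by a new variable'' must be read as iterated diagonalization across blocks of variables is a useful clarification the paper leaves implicit.
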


Now it is time for us to prove Theorem \ref{thm:rz}.

\begin{proof}[Proof of Theorem \ref{thm:rz}]
	Since $\alpha_n(t,1) = P_n(t)$, if follows that  $P_n(t)$ is stable and hence real-rooted as a univariate polynomial with real coefficients.
	For  the real-rootedness of $K_{n,j}(x)$ and $L_{n,i}(x)$, we shall apply  Theorem \ref{thm:z} to $\alpha_n(t,q)$.
	Since
	$$\alpha_n(t,q) =  \sum_{j=0}^{n-1}K_{n,j}(t)q^j,$$
	we get that the polynomial sequence $\{K_{n,n-j-1}(t)\}_{j=0}^{n-1}$ forms a generalized Sturm sequence.
	Similarly, since
	$$\alpha_n(t,q) =   \sum_{i=0}^{n-1}L_{n,i}(q)t^i,$$
	we get that the polynomial sequence $\{L_{n,n-i-1}(t)\}_{i=0}^{n-1}$ forms a generalized Sturm sequence.
	This completes the proof of Theorem \ref{thm:rz}.
\end{proof}


Before ending this paper, we would like to express $\alpha_n(t,q)$ and $P_n(t)$ in terms of $A_n(t)$, which leads to an alternative proof of Theorem \ref{thm:as}.

\begin{thm}
	For any positive integer $n$, we have
	\begin{align}\label{eq:alphaA}
		\alpha_n(t,q) = (1+q)^{n-1} A_n \left(\frac{t+q}{1+q}\right),
	\end{align}
	and
	\begin{align}\label{eq:PA}
		P_n(t) = 2^{n-1} A_n \left(\frac{t+1}{2}\right).
	\end{align}
\end{thm}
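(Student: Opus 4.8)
The plan is to prove the first identity \eqref{eq:alphaA} combinatorially and then obtain \eqref{eq:PA} immediately as the special case $q=1$, since $P_n(t)=\alpha_n(t,1)$. So essentially all the work goes into \eqref{eq:alphaA}.

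First I would set up a bijection between $\SP_n$ and pairs consisting of an ordinary permutation together with a choice of bar positions. Deleting all bars from a segmented permutation $\sigma\in\SP_n$ produces an underlying permutation $\pi\in\mathfrak{S}_n$, and conversely $\sigma$ is recovered from $\pi$ by recording, for each of the $n-1$ slots between consecutive letters, whether a bar sits there. This yields a bijection $\SP_n\leftrightarrow\mathfrak{S}_n\times 2^{[n-1]}$, where the subset $S\subseteq[n-1]$ indexes the slots carrying a bar. The next step is to transport the two statistics through this bijection. Let $D(\pi)\subseteq[n-1]$ be the set of positions $i$ with $\pi_i>\pi_{i+1}$, so that $|D(\pi)|=\des(\pi)$. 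For the pair $(\pi,S)$ the number of bars is $\seg(\sigma)=|S|$, while a descent of $\sigma$ is exactly a descent position of $\pi$ that carries no bar; hence $\des(\sigma)=|D(\pi)\setminus S|$.

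With the statistics identified, the weight sum factorizes over descent and ascent slots. Writing $S=S_1\sqcup S_2$ with $S_1\subseteq D(\pi)$ and $S_2\subseteq[n-1]\setminus D(\pi)$, and abbreviating $d=\des(\pi)$, the descent slots contribute $\sum_{k}\binom{d}{k}t^{d-k}q^{k}=(t+q)^d$ and the ascent slots contribute $\sum_{j}\binom{n-1-d}{j}q^j=(1+q)^{n-1-d}$. Summing over all $\pi$ and grouping by descent number then gives
\[
\alpha_n(t,q)=\sum_{d=0}^{n-1}A(n,d)\,(t+q)^{d}(1+q)^{n-1-d}
=(1+q)^{n-1}A_n\!\left(\frac{t+q}{1+q}\right),
\]
which is \eqref{eq:alphaA}. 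Specializing $q=1$ and using $P_n(t)=\alpha_n(t,1)$ yields $P_n(t)=2^{n-1}A_n\!\left(\frac{t+1}{2}\right)$, proving \eqref{eq:PA}.

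I expect the only delicate point to be the bookkeeping in the second step: one must check that inserting a bar into a descent slot removes precisely one descent and leaves the ascent/descent status of every other slot untouched, so that $\des$ and $\seg$ decouple cleanly as above. Once this is verified, the remainder is a routine binomial expansion with no analytic content, and this derivation furnishes an alternative, bijective route to the stability of $\alpha_n(t,q)$ established in Theorem~\ref{thm:as}, since stability of $A_n$ together with the stability-preserving affine substitution $t\mapsto(t+q)/(1+q)$ transfers real stability to $\alpha_n(t,q)$.
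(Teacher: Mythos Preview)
Your proof is correct and takes a genuinely different route from the paper's. The paper invokes Nunge's identity $K(n,i,j)=\sum_{k}\binom{k}{i}\binom{n-1-k}{i+j-k}A(n,k)$ as a black box and then performs a chain of binomial manipulations to collapse the triple sum into $(1+q)^{n-1}A_n\!\left(\tfrac{t+q}{1+q}\right)$. You instead argue directly from the definition via the bijection $\SP_n\leftrightarrow\mathfrak{S}_n\times 2^{[n-1]}$ and the observation that $\des(\sigma)=|D(\pi)\setminus S|$, which factorizes the weight over descent and ascent slots without any external input. Your approach is more elementary and self-contained; in fact it essentially rederives Nunge's identity as a byproduct, whereas the paper's argument presupposes it. One small caution on your closing remark: calling $t\mapsto (t+q)/(1+q)$ a ``stability-preserving affine substitution'' is imprecise, since the map is not affine in $(t,q)$ and does not fall under the standard closure operations; the paper's alternative stability argument instead checks directly that $(t+q)/(1+q)$ avoids the negative real axis when $t,q\in\mathbb{H}_+$. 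This does not affect the proof of the theorem itself, which stands as written.
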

\begin{proof}
By the following identity \cite[Corollary 2.5]{NungeEulerian}:
\begin{align*}
K(n,i,j) = \sum_{k=0}^{n-1} \binom{k}{i} \binom{n-1-k}{i+j-k} A(n,k) ,
\end{align*}
we have that
\allowdisplaybreaks
\begin{align*}
\alpha_n(t,q)  & =  \sum_{i=0}^{n-1}\sum_{j=0}^{n-i-1}\sum_{k=0}^{n-1} \binom{k}{i} \binom{n-1-k}{i+j-k} A(n,k)  t^i q^j \\
& = \sum_{k=0}^{n-1} A(n,k)  \sum_{i=0}^{k} \binom{k}{i} t^i  q^{k-i} \sum_{j=k-i}^{n-i-1} \binom{n-1-k}{i+j-k} q^{i+j-k}\\
& = \sum_{k=0}^{n-1} A(n,k)  \sum_{i=0}^{k} \binom{k}{i} t^i  q^{k-i} (1+q)^{n-1-k}\\
& = \sum_{k=0}^{n-1} A(n,k)  (1+q)^{n-1-k} q^{k} \sum_{i=0}^{k} \binom{k}{i} (t/q)^i  \\
& = \sum_{k=0}^{n-1} A(n,k)  (1+q)^{n-1-k} q^{k} (1+t/q)^k  \\
& = \sum_{k=0}^{n-1} A(n,k)    (1+q)^{n-1-k} (t+q)^k\\
& = (1+q)^{n-1}\sum_{k=0}^{n-1} A(n,k)   \left(\frac{t+q}{1+q}\right)^k.
\end{align*}
This completes the proof of \eqref{eq:alphaA}.
Since $P_n(t)=\alpha_n(t,1)$, the equation \eqref{eq:PA} follows from \eqref{eq:alphaA}.
This completes the proof.
\end{proof}

\begin{proof}[Alternative proof of Theorem \ref{thm:as}]
Let $t \in \mathbb{H}_+$ and $q\in \mathbb{H}_+$. Then we have that $t+q \in \mathbb{H}_+$ and  $1+q \in \mathbb{H}_+$ and thus $\frac{t+q}{1+q}$ will not be a negative real number. Since the Eulerian polynomial $A_n(x)$ has only negative real zeros, the polynomial $\alpha_n(t,q)$ will  be non-zero whenever $t \in \mathbb{H}_+$ and $q\in \mathbb{H}_+$.
Therefore, the polynomial  $\alpha_n(t,q)$  is stable.
\end{proof}

We remark  that comparing \eqref{eq:PA} with the well-known formula of Eulerian polynomials:
\begin{align*}
\frac{A_n(t)}{(1-t)^{n+1}} = \sum_{k=0}^{\infty} (k+1)^n t^{k},
\end{align*}
it follows that
\begin{align*}
\frac{P_n(t)}{(1-t)^{n+1}} = \sum_{k=1}^{\infty} \left(1+t\right)^{k-1} \frac{k^n}{2^{k+1}}
\end{align*}
which has appeared in \cite[Proposition 3.5]{NungeEulerian}.

\vskip 3mm
\noindent {\bf Acknowledgments.}
We would like to thank the anonymous referee for the valuable comments. 
This work was supported by the National Science Foundation of China (Nos. 11626172, 11701424), the PHD Program of TJNU (No. XB1616), and  MECF of Tianjin (No. JW1713).


\end{document}